\newtheorem{theorem}{Theorem}[section]
\newtheorem{lemma}[theorem]{Lemma}
\theoremstyle{remark}
\newtheorem{remark}[theorem]{Remark}
\newtheorem*{remark*}{Remark}
\theoremstyle{definition}
\numberwithin{equation}{section}
\newcommand{\vertiii}[1]{{\left\vert\kern-0.25ex\left\vert\kern-0.25ex\left\vert #1 
    \right\vert\kern-0.25ex\right\vert\kern-0.25ex\right\vert}}
\newcounter{smallromans}
\newenvironment{romanenumerate}
{\begin{list}{{\normalfont\textrm{(\roman{smallromans})}}}%
  {\usecounter{smallromans}\setlength{\itemindent}{0cm}%
   \setlength{\leftmargin}{5.5ex}\setlength{\labelwidth}{5.5ex}%
   \setlength{\topsep}{.5ex}\setlength{\partopsep}{.5ex}%
   \setlength{\itemsep}{0.1ex}}}%
{\end{list}}
\newcounter{smallromansdash}
\newcounter{bigromans} 
  {\end{list}}
\title[Embedding Banach spaces into $\ell_\infty^c(\Gamma)$]{Embedding Banach spaces into the space of bounded functions with countable support}
\author[W.B.~Johnson]{William B. Johnson}
\author[T.~Kania]{Tomasz Kania}
\address{Department of Mathematics, Texas A\&M University, College Station, TX 77843, U.S.A}
\email{johnson@math.tamu.edu}
\address{Institute of Mathematics, Czech Academy of Sciences, \v{Z}itn\'{a} 25, 115~67 Prague 1, Czech Republic}
\email{tomasz.marcin.kania@gmail.com}
\date{\today}
\thanks{The first-named author was supported in part by NSF DMS-1565826. The second-named author acknowledges with thanks funding from funding received from GA\v{C}R project 17-27844S; RVO 67985840 (Czech Republic)}
\subjclass[2010]{46B20, 46B26 (primary), and 47B48 (secondary).}
\keywords{Banach space, WLD space, countably supported, long unconditional basis}
\begin{document}
\begin{abstract}We prove that a WLD subspace of the space $\ell_\infty^c(\Gamma)$ consisting of all bounded, countably supported functions on a  set $\Gamma$ embeds isomorphically  into $\ell_\infty$ if and only if it does not contain isometric copies of $c_0(\omega_1)$. Moreover, a subspace of $\ell_\infty^c(\omega_1)$ is constructed that has an unconditional basis, does not embed into $\ell_\infty$, and whose every weakly compact subset is separable (in particular, it cannot contain any isomorphic copies of $c_0(\omega_1)$). \end{abstract}
\maketitle

\section{Introduction}
It is classical that every separable Banach space is isometrically isomorphic to a subspace of  $\ell_\infty$, the space of bounded sequences with the supremum norm. Since every weakly compact subset of $\ell_\infty$ is separable,  any weakly compactly generated space; in particular, any reflexive space; that admits an injective bounded linear operator into  $\ell_\infty$  must be separable. (A Banach space is \emph{weakly compactly generated}, WCG for short, when it contains a  weakly compact subset whose linear span is dense.) For this reason, there is no bounded linear injection from $c_0(\Gamma)$ into $\ell_\infty$ when the set $\Gamma$ is uncountable. Nevertheless, $c_0(\omega_1)$ sits naturally as a subspace of $\ell_\infty$'s close cousin, the space $\ell_\infty^c(\Gamma)$, which consists of all bounded scalar-valued functions on $\Gamma$ that are non-zero on at most countably many points in $\Gamma$. \smallskip

The aim of this note is to study Banach spaces that embed into $\ell_\infty^c(\omega_1)$ but do not embed into $\ell_\infty$ and their relation to containment of isomorphic, or even isometric, copies of $c_0(\omega_1)$. In particular, we prove that a non-separable weakly Lindel\"of determined (WLD) subspace of $\ell_\infty^c(\Gamma)$ contains an isometric copy of $c_0(\omega_1)$. (A Banach space $X$ is WLD provided for some set $\Gamma$ there exists an injective linear operator $T\colon X^* \to \ell_\infty^c(\Gamma)$ that is continuous as a map from $X^*$ with the weak* topology to $\ell_\infty^c(\Gamma)$ with the topology of pointwise convergence.) \smallskip 

The notation is standard.  We just mention that all operators are assumed to be bounded and linear, and an  isomorphism is a bounded linear operator that is bounded below on the unit sphere of its domain. We consider cardinal numbers as initial ordinal numbers. 
A~cardinal number $\lambda$ is \emph{regular} whenever a set of cardinality $\lambda$ cannot be expressed as a~union of fewer than $\lambda$ sets that have cardinality less than $\lambda$.

\section{The results}
For a WLD space $X$, the density character of $X^*$ endowed with the weak* topology is the same as of $X$ with the norm topology (\cite[Proposition 5.40]{czechbook}), hence the existence of a~bounded linear injection $T\colon X\to \ell_\infty$ implies that $X$ is separable. Indeed, by Goldstine's theorem, $\ell_\infty^* = \ell_1^{**}$ is weak*-separable. By injectivity of $T$, the adjoint map $T^*\colon \ell_\infty^*\to X^*$ has dense range, and so $X^*$ has a weak*-separable dense subspace, therefore it must be weak*-separable. As $X$ is WLD, also $X$ must be separable. We shall employ this fact to construct copies of $c_0(\omega_1)$ in non-separable WLD subspaces of $\ell_\infty^c(\Gamma)$. (In the case of $X=c_0(\Gamma)$ the result was already recorded in \cite{r-s} and \cite[Lemma 6]{hn}).
\begin{theorem}\label{main}Let $\Gamma$ be a set and let $X$ be a WLD subspace of $\ell_\infty^c(\Gamma)$. Then the following assertions are equivalent:
\begin{romanenumerate}
\item\label{t1} $X$ is separable,
\item\label{t2} $X$ embeds into $\ell_\infty$,
\item\label{t2a}there exists a bounded linear injection from $X$ into $\ell_\infty$,
\item\label{t3} $X$ does not contain a subspace that is isomorphic to  $c_0(\omega_1)$,
\item\label{t4} $X$ does not contain a subspace that is isometrically  isomorphic to  $c_0(\omega_1)$.
\end{romanenumerate}
In particular, every reflexive subspace of $\ell_\infty^c(\Gamma)$ is separable.
 \end{theorem}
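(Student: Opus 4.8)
The plan is to prove the two chains of implications (i)$\Rightarrow$(ii)$\Rightarrow$(iii)$\Rightarrow$(i) and (i)$\Rightarrow$(iv)$\Rightarrow$(v)$\Rightarrow$(i), so that all five conditions are equivalent. Four of these are routine: (i)$\Rightarrow$(ii) is the classical isometric embedding of a separable space into $\ell_\infty$; (ii)$\Rightarrow$(iii) is trivial; (iii)$\Rightarrow$(i) is exactly the separability criterion recalled just before the theorem, namely that a WLD space admitting a bounded linear injection into $\ell_\infty$ is separable; (i)$\Rightarrow$(iv) holds because a separable space cannot contain the non-separable space $c_0(\omega_1)$; and (iv)$\Rightarrow$(v) is immediate, an isometric copy being in particular an isomorphic one. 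Thus everything reduces to the single substantial implication (v)$\Rightarrow$(i), which I would prove in contrapositive form: \emph{if $X$ is non-separable, then $X$ contains an isometric copy of $c_0(\omega_1)$}. This is the assertion advertised as the main theorem in the introduction.

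The engine of the construction is an observation about coordinate annihilators. For a countable set $C\subseteq\Gamma$ put $X_C=\{x\in X: x|_C=0\}=\bigcap_{\gamma\in C}\ker e_\gamma$, where $e_\gamma\in X^*$ is evaluation at the point $\gamma$. I claim that $X_C$ is again non-separable. Indeed, the restriction map $R\colon X\to\ell_\infty(C)$, $Rx=x|_C$, is a norm-one operator with kernel $X_C$, so it induces a bounded linear injection $\bar R\colon X/X_C\hookrightarrow\ell_\infty(C)$, and since $C$ is countable $\ell_\infty(C)$ is isometric to $\ell_\infty$. Moreover $X/X_C$ is WLD: its dual is the weak*-closed subspace $X_C^{\perp}\subseteq X^*$, on which the weak* topology of $(X/X_C)^*$ agrees with the subspace weak* topology, so restricting the defining operator $T\colon X^*\to\ell_\infty^c(\Gamma')$ to $X_C^{\perp}$ yields an injective weak*-to-pointwise continuous operator witnessing that $X/X_C$ is WLD. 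Applying the criterion (iii)$\Rightarrow$(i) to $X/X_C$ shows that $X/X_C$ is separable; as $X$ is non-separable, $X_C$ must be non-separable, in particular nonzero. I expect this lemma to be the main obstacle: the content is precisely that killing countably many coordinates of a non-separable WLD subspace of $\ell_\infty^c(\Gamma)$ cannot exhaust its non-separability.

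With the lemma in hand the embedding is built by a transfinite recursion of length $\omega_1$. Having chosen norm-one vectors $(x_\beta)_{\beta<\alpha}$ in $X$ with pairwise disjoint supports, set $C_\alpha=\bigcup_{\beta<\alpha}\operatorname{supp}(x_\beta)$; since $\alpha<\omega_1$ this is a countable union of countable sets, hence countable. By the lemma $X_{C_\alpha}$ is non-separable, so I may pick a norm-one $x_\alpha\in X_{C_\alpha}$, and then $\operatorname{supp}(x_\alpha)\cap C_\alpha=\varnothing$, so $x_\alpha$ has support disjoint from every earlier $x_\beta$. This produces $(x_\alpha)_{\alpha<\omega_1}$, norm-one and pairwise disjointly supported. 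For disjointly supported vectors in $\ell_\infty^c(\Gamma)$ each point $\gamma$ lies in at most one support, whence $\bigl\|\sum_{\alpha\in F}a_\alpha x_\alpha\bigr\|_\infty=\max_{\alpha\in F}|a_\alpha|$ for every finite $F$ and all scalars; thus $e_\alpha\mapsto x_\alpha$ extends to an isometry of $c_0(\omega_1)$ onto $\overline{\operatorname{span}}\{x_\alpha:\alpha<\omega_1\}$, giving the desired isometric copy and completing (v)$\Rightarrow$(i).

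Finally, the closing assertion falls out immediately: a reflexive subspace $X\subseteq\ell_\infty^c(\Gamma)$ is weakly compactly generated, its unit ball being weakly compact, hence WLD; and being reflexive it contains no isomorphic copy of $c_0$, a fortiori none of $c_0(\omega_1)$. So condition (iv) holds, and the theorem forces $X$ to be separable. Apart from the non-separability lemma, every step is either classical or a direct disjoint-support computation.
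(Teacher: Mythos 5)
Your proposal is correct and follows essentially the same route as the paper: a transfinite recursion over countable coordinate sets producing $\omega_1$ disjointly supported unit vectors in $X$, whose closed linear span is isometrically $c_0(\omega_1)$. The only real difference is that you contrapose (v)$\Rightarrow$(i) rather than (v)$\Rightarrow$(iii), which obliges you to prove the stronger lemma that $X_C=\{x\in X: x|_C=0\}$ is non-separable (via the fact that $X/X_C$ is WLD and injects into $\ell_\infty$), whereas the paper needs only that the restriction $P_\Lambda|_X$ to a countable coordinate set has nontrivial kernel --- which is immediate from the assumed failure of (iii) --- so your quotient-is-WLD step, while correct, is slightly more machinery than necessary.
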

First we introduce some notation. Let $\Gamma$ be a set and let $\Lambda\subseteq \Gamma$. Consider the contractive projection $P_\Lambda\colon \ell_\infty^c(\Gamma)\to \ell_\infty^c(\Gamma)$ given by
$$(P_\Lambda f)(\gamma) =\left\{\begin{array}{ll}f(\gamma)\, &\gamma \in \Gamma,\\ 0, &\gamma \in \Lambda\setminus \Gamma \end{array}   \right. \qquad \big(f\in \ell_\infty^c(\Gamma)\big).$$
We  identify   the range of $P_\Gamma$ with the space $\ell_\infty^c(\Gamma)$. Certainly, when $\Gamma$ is countably infinite, the range of $P_\Gamma$ is isometrically isomorphic to $\ell_\infty$.

\begin{proof}The implications \eqref{t1} $\Rightarrow$ \eqref{t2} $\Rightarrow$ \eqref{t2a} $\Rightarrow$ \eqref{t3} $\Rightarrow$ \eqref{t4} are clear. We have already observed in the introduction that for WLD spaces \eqref{t2a} $\Rightarrow$ \eqref{t1}. We shall prove now that \eqref{t4} $\Rightarrow$ \eqref{t2a} by contraposition.\smallskip 

Suppose that there is no bounded linear injection from $X$ into $\ell_\infty$. In particular, for every countable set $\Lambda\subset \Gamma$, the restriction operator $P_\Gamma|_X$ is not injective, which means that $P_\Lambda f_\Lambda = 0$ for some unit vector $f_\Lambda$ in the range of $P_\Lambda$. Consequently, it is possible to choose by transfinite recursion an uncountable family of pairwise disjoint countable subsets $(\Lambda_\alpha)_{\alpha<\omega_1}$ and unit vectors $f_\alpha\in \ell_\infty(\Lambda_\alpha)\cap X$. Then, the closed linear span of $\{f_\alpha\colon \alpha < \omega_1\}$ is isometric to $c_0(\omega_1)$.
\end{proof}
\begin{remark}Theorem A implies that the unit sphere of a non-separable WLD subspace of $\ell_\infty^c(\Gamma)$ contains an uncountable symmetrically $(1+)$-separated subset, that is, a set $A$ such that $\|x\pm y\| > 1$ for distinct $x,y\in A$; this is because $c_0(\omega_1)$ has this property. This observation complements \cite[Corollary 3.6]{hkr}, where it was proved that WLD spaces of density greater than the continuum contain such sets. It should be noted however that not every renorming of $c_0(\omega_1)$ embeds isometrically into $\ell_\infty^c(\Gamma)$, as at least under the Continuum Hypothesis, there exists a renorming of   $c_0(\omega_1)$ that does not contain isometric copies of itself (\cite[Theorem 5.9]{hkr}).\end{remark}

The hypothesis of being WLD cannot be removed completely from the statement of Theorem A. Before we give a relevant example, we prove a simple lemma.
\begin{lemma}\label{small}Let $X$ be a subspace of $\ell_\infty^c(\omega_1)$. If $X$ embeds into $\ell_\infty$, then there is a $\alpha<\omega_1$ such that the operator $P_{[0,\alpha)}|_X$ is bounded below; that is, bounded below on the unit sphere of $X$.\end{lemma}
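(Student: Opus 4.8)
The plan is to argue by contraposition: assuming that $P_{[0,\alpha)}|_X$ fails to be bounded below for every $\alpha<\omega_1$, I will contradict the hypothesis that $X$ embeds into $\ell_\infty$. First I record the consequence of embeddability that drives everything. Since an isomorphic embedding is in particular an injection, the argument recalled in the introduction shows that $X^*$ is weak*-separable; let $\{f_n : n\in\mathbb N\}$ be a sequence that is weak*-dense in the unit ball $B_{X^*}$. Because the evaluation $f\mapsto|f(x)|$ is weak*-continuous for each fixed $x$, density yields $\|x\| = \sup_{n}|f_n(x)|$ for all $x\in X$; thus $\{f_n\}$ is a norming sequence of functionals of norm at most $1$. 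This is the only way the embedding will be used.

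Next I construct a long, almost-disjoint family. Suppose $P_{[0,\beta)}|_X$ is bounded below for no $\beta<\omega_1$. By transfinite recursion I choose unit vectors $x_\xi\in X$ and countable ordinals $\mu_\xi$ with $\operatorname{supp}(x_\xi)\subseteq[0,\mu_\xi)$ as follows. Having chosen $x_\eta$ for $\eta<\xi$, put $\beta_\xi = \sup_{\eta<\xi}\mu_\eta$, which is $<\omega_1$ because $\omega_1$ is regular. Since $P_{[0,\beta_\xi)}|_X$ is not bounded below I may pick a unit vector $x_\xi$ with $\|P_{[0,\beta_\xi)}x_\xi\| < \tfrac14$, and then a countable $\mu_\xi>\beta_\xi$ with $\operatorname{supp}(x_\xi)\subseteq[0,\mu_\xi)$. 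Writing $v_\xi = P_{[0,\beta_\xi)}x_\xi$ and $u_\xi = x_\xi - v_\xi$, we have $\|v_\xi\|<\tfrac14$, while $u_\xi$ is supported in $[\beta_\xi,\mu_\xi)$. Since $\mu_\eta\le\beta_\xi$ for $\eta<\xi$, the supports of the $u_\xi$ are pairwise disjoint, and $\|u_\xi\|\ge 1-\tfrac14$.

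Finally I extract a contradiction by a pigeonhole together with the disjointness. For each $\xi$ choose $n(\xi)$ with $|f_{n(\xi)}(x_\xi)|>\tfrac34$, possible since $\sup_n|f_n(x_\xi)|=\|x_\xi\|=1$. As $\mathbb N$ is countable there are an $n_0$ and an uncountable set $\Xi\subseteq\omega_1$ with $|f_{n_0}(x_\xi)|>\tfrac34$ for all $\xi\in\Xi$. Fix distinct $\xi_1,\dots,\xi_k\in\Xi$ and unimodular scalars $\theta_i$ with $\theta_i f_{n_0}(x_{\xi_i}) = |f_{n_0}(x_{\xi_i})|$. On one hand $|f_{n_0}(\sum_i\theta_i x_{\xi_i})| = \sum_i|f_{n_0}(x_{\xi_i})| > \tfrac34 k$, so $\|\sum_i\theta_i x_{\xi_i}\| > \tfrac34 k$ because $\|f_{n_0}\|\le1$. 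On the other hand the $u_{\xi_i}$ are disjointly supported, whence $\|\sum_i\theta_i u_{\xi_i}\| = \max_i\|u_{\xi_i}\|\le1$, and therefore $\|\sum_i\theta_i x_{\xi_i}\|\le\|\sum_i\theta_i u_{\xi_i}\| + \sum_i\|v_{\xi_i}\| \le 1 + \tfrac14 k$. Combining, $\tfrac34 k < 1 + \tfrac14 k$, i.e.\ $k<2$, which is absurd once $\Xi$ provides three or more indices.

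The main obstacle, and the point the estimate is designed to overcome, is that the hypothesis only yields vectors that are \emph{small} on initial segments rather than \emph{supported} off them, so the $x_\xi$ are merely almost disjoint and their overlapping tails $v_\xi$ could a priori accumulate. Splitting $x_\xi = u_\xi + v_\xi$ isolates a genuinely disjoint part $u_\xi$, which contributes at most $1$ to the norm of any signed sum, from the tails, which contribute at most $\tfrac14 k$; hence the linear-in-$k$ lower bound coming from the single norming functional $f_{n_0}$ wins. Choosing the threshold $\tfrac14<\tfrac12$ is precisely what makes the two bounds incompatible for large $k$.
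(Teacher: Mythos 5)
Your argument is genuinely different from the paper's and its combinatorial core is sound, but one step, as written, is not justified. You derive your countable norming family by asserting that weak*-separability of $X^*$ (which the introduction's argument does give) yields a weak*-dense \emph{sequence in the unit ball} $B_{X^*}$, and hence the exact identity $\|x\|=\sup_n|f_n(x)|$. That implication fails in general: weak*-separability of a dual space does not pass to its dual ball (the weak* topology is not metrizable, so subsets of weak*-separable sets need not be weak*-separable), and the dual ball of an isomorphic copy of a subspace of $\ell_\infty$ need not be weak*-separable in the given norm. The repair is standard and actually shorter: if $T\colon X\to\ell_\infty$ is the embedding with $a\|x\|\leqslant\|Tx\|\leqslant\|T\|\,\|x\|$, pull back the coordinate functionals and set $f_n=T^*e_n^*/\|T\|$; then $\|f_n\|\leqslant 1$ and $\sup_n|f_n(x)|\geqslant c\|x\|$ with $c=a/\|T\|$. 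Your argument then goes through verbatim once the hard-coded constants are relativized: choose $\|P_{[0,\beta_\xi)}x_\xi\|<\varepsilon$ with $\varepsilon<c/2$, select $n(\xi)$ with $|f_{n(\xi)}(x_\xi)|>c/2$, and the two estimates give $ck/2<1+k\varepsilon$, i.e.\ $k<(c/2-\varepsilon)^{-1}$, which is the desired contradiction. Everything else — the transfinite recursion using regularity of $\omega_1$, the splitting $x_\xi=u_\xi+v_\xi$ into a genuinely disjoint part and a small tail, the pigeonhole on the countable norming family, and the sup-norm identity for disjointly supported vectors — is correct.

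For comparison, the paper proceeds in the opposite direction: it uses injectivity of $\ell_\infty$ to extend the embedding to an operator $J\colon\ell_\infty^c(\omega_1)\to\ell_\infty$ that is bounded below on $X$, and then invokes the fact that any such operator annihilates a tail $\ell_\infty^c\big([\alpha,\omega_1)\big)$ for some countable $\alpha$, hence factors through $P_{[0,\alpha)}$. The proof of that quoted fact is essentially the same disjointness-plus-pigeonhole argument you run, but carried out on genuinely disjointly supported vectors in $\ell_\infty^c(\omega_1)$ rather than on almost-disjoint vectors in $X$. Your version is more self-contained (no extension, no factorization through a quotient) at the cost of the $u_\xi+v_\xi$ bookkeeping; the paper's is shorter given the general fact about operators out of $\ell_\infty^c(\omega_1)$.
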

\begin{proof} Since  $\ell_\infty$ is injective, there is an operator $J\colon \ell_\infty^c(\omega_1) \to \ell_\infty$ so that the restriction of $J$ to $X$ is bounded below (indeed, $J$ is any extension of an embedding of $X$ into $\ell_\infty$ to $\ell_\infty^c(\omega_1)$).  It is therefore enough to observe that for any operator $J\colon \ell_\infty^c(\omega_1) \to \ell_\infty$ there is a countable ordinal $\alpha$ such that $J$ vanishes on $\ell_\infty^c\big([\alpha,\omega_1)\big)$ (because then $J$ factors through the quotient $\ell_\infty^c(\omega_1) / \ell_\infty^c\big([\alpha,\omega_1)\big),$
which is isomorphic to $\ell_\infty^c\big([0,\alpha)\big)\cong \ell_\infty)$. \end{proof}
\begin{theorem} \label{example} There exists a subspace $Z$ of $\ell_\infty^c(\omega_1)$ with an unconditional basis such that $Z$ does not embed into $\ell_\infty$   and   $c_0(\omega_1)$ does not embed into $Z$. Moreover, $Z$ contains an isomorphic copy of $\ell_1(\omega_1)$ and there is an injective operator from $Z$ into $\ell_\infty$.\end{theorem}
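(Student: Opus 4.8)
The plan is to build $Z$ completely explicitly as the closed span of one long unconditional basic sequence in $\ell_\infty^c(\omega_1)$, designed so that its norm interpolates between an $\ell_1$-norm (which supplies a copy of $\ell_1(\omega_1)$ and rules out $c_0(\omega_1)$) and a weighted sup-norm with \emph{unbounded} weights (which destroys embeddability into $\ell_\infty$). I fix three ingredients: an independent family of signs $(r_\alpha)_{\alpha<\omega_1}\subseteq\{-1,1\}^\omega$, so that $\|\sum_{\alpha\in F}a_\alpha r_\alpha\|_\infty=\sum_{\alpha\in F}|a_\alpha|$ for every finite $F$ and all scalars (an independent family on $\omega$ has size $\mathfrak c\ge\omega_1$); an injection $\phi\colon\omega_1\to[\omega,\omega_1)$, say $\phi(\alpha)=\omega+\alpha$, giving each $\alpha$ a private coordinate; and weights $1\le M_\alpha$ obtained from a partition $\omega_1=\bigsqcup_{n\ge1}I_n$ into uncountable sets by setting $M_\alpha=n$ for $\alpha\in I_n$. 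Put
\[
z_\alpha=r_\alpha+M_\alpha e_{\phi(\alpha)}\in\ell_\infty^c(\omega_1),\qquad Z=\overline{\operatorname{span}}\{z_\alpha:\alpha<\omega_1\}.
\]
Since the tails $\phi(\alpha)$ are distinct and disjoint from $\omega$, a direct computation gives
\[
\Big\|\sum_{\alpha\in F}a_\alpha z_\alpha\Big\|_\infty=\max\Big(\sum_{\alpha\in F}|a_\alpha|,\ \max_{\alpha\in F}M_\alpha|a_\alpha|\Big),
\]
which is sign-invariant, so $(z_\alpha)$ is a $1$-unconditional basis of $Z$ with coordinate functionals $M_\alpha^{-1}e_{\phi(\alpha)}^*$. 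On any $I_n$ the second term is dominated by the first, so $\overline{\operatorname{span}}\{z_\alpha:\alpha\in I_1\}$ is isometric to $\ell_1(\omega_1)$, giving the required $\ell_1(\omega_1)$.

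For non-embeddability I would invoke Lemma \ref{small}: it suffices to show $P_{[0,\delta)}|_Z$ is not bounded below for any $\delta<\omega_1$. Given $\delta$, the set $\{\alpha:\phi(\alpha)<\delta\}$ is countable, so each $I_n$ contains some $\alpha$ with $\phi(\alpha)\ge\delta$; for such $\alpha$ the unit vector $z_\alpha/M_\alpha$ satisfies $\|P_{[0,\delta)}(z_\alpha/M_\alpha)\|\le\|r_\alpha\|_\infty/M_\alpha=1/n$, which tends to $0$ as $n\to\infty$. Hence no initial-segment projection is bounded below and $Z$ does not embed into $\ell_\infty$. The injective operator into $\ell_\infty$ is the restriction to the head, $J\colon z\mapsto(z(n))_{n<\omega}$, which is contractive; if $Jz=0$ then the head part $\sum_\alpha a_\alpha r_\alpha$ vanishes, whence $\sum_\alpha|a_\alpha|=0$ and $z=0$ by the independence of $(r_\alpha)$.

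The one genuinely nontrivial point — and the crux of the whole example — is that $c_0(\omega_1)$ does not embed into $Z$, and this is exactly where the escaping mechanism (which \emph{wants} spread-out, $c_0$-like unit vectors) collides with the absence of $c_0(\omega_1)$. The resolution is that the $\ell_1$-term keeps a lower $\ell_1$-estimate always present; it is merely non-uniform against the norm, which is precisely what lets unit vectors escape. Concretely, suppose $T\colon c_0(\omega_1)\to Z$ is an isomorphic embedding and write $u_\xi=Te_\xi=\sum_\alpha b^\xi_\alpha z_\alpha$. Reading off only the first term of the norm formula, for every finite $F$ and all signs $\sum_\alpha\big|\sum_{\xi\in F}\varepsilon_\xi b^\xi_\alpha\big|\le\big\|\sum_{\xi\in F}\varepsilon_\xi u_\xi\big\|_\infty\le\|T\|$. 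Averaging over the signs and using the elementary inequality $\tfrac12|s+x|+\tfrac12|s-x|\ge|x|$ in each coordinate $\alpha$ gives $\sum_\alpha\max_{\xi\in F}|b^\xi_\alpha|\le\|T\|$, hence $\sum_\alpha\sup_{\xi<\omega_1}|b^\xi_\alpha|\le\|T\|<\infty$. Therefore $b^\xi_\alpha\ne0$ for only countably many $\alpha$ in total, so every $u_\xi$ lies in the closed span of those countably many $z_\alpha$, which is separable — contradicting the non-separability of $c_0(\omega_1)$. I expect this averaging step, together with the choice of unbounded weights that simultaneously preserves the $\ell_1$-lower estimate and permits escape, to be the main obstacle; the remaining verifications are routine. (In particular $Z$ is non-separable yet contains no $c_0(\omega_1)$, so it is not WLD, consistently with Theorem \ref{main}.)
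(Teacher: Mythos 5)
Your construction is, up to renormalising each basis vector, the same as the paper's: the paper takes $z_\alpha=e_\alpha+\tfrac{1}{n(\alpha)+1}f_\alpha$, where $(f_\alpha)_{\alpha<\omega_1}$ is an isometric $\ell_1(\omega_1)$-system supported on a fixed countable set $\Gamma_0$ (exactly the role of your independent family $(r_\alpha)$) and $n(\alpha)$ is the finite part of $\alpha$ (the role of your partition $\omega_1=\bigsqcup_n I_n$); multiplying by $n(\alpha)+1$ turns the paper's vector into your $r_\alpha+M_\alpha e_{\phi(\alpha)}$. Accordingly, your verifications of $1$-unconditionality, of the isometric copy of $\ell_1(\omega_1)$ where the $\ell_1$-term dominates, of non-embeddability via Lemma~\ref{small} (unit vectors $z_\alpha/M_\alpha$ escaping past any $[0,\delta)$ with head of norm $1/n$), and of the injective operator given by restriction to the head coordinates, all coincide with the paper's. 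The one place you genuinely diverge is the proof that $c_0(\omega_1)$ does not embed into $Z$: the paper deduces it from the injectivity of $P_{\Gamma_0}|_Z$ together with the fact that every weakly compact subset of $\ell_\infty$ is separable, so that $Z$ contains no non-separable weakly compact set, whereas $c_0(\omega_1)$ contains the non-separable weakly compact set $\{e_\xi\colon\xi<\omega_1\}\cup\{0\}$. Your sign-averaging argument --- combining the lower $\ell_1$-estimate $\|\sum_\alpha c_\alpha z_\alpha\|\ge\sum_\alpha|c_\alpha|$ with $\tfrac12|s+x|+\tfrac12|s-x|\ge|x|$ to obtain $\sum_\alpha\sup_\xi|b^\xi_\alpha|\le\|T\|$ and hence a countable joint support for the images $Te_\xi$ --- is correct and more self-contained, since it uses no facts about weakly compact subsets of $\ell_\infty$; on the other hand it yields only the non-containment of $c_0(\omega_1)$, while the paper's route gives the formally stronger conclusion (highlighted in the abstract) that every weakly compact subset of $Z$ is separable. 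Both arguments are sound; note that your injectivity of $J$ would also let you run the paper's shorter argument verbatim if you are willing to quote the separability of weakly compact subsets of $\ell_\infty$.
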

\begin{proof}Since $\ell_\infty$ contains an isometric copy of $\ell_1(\mathfrak{c})$, we may fix a countable set $\Gamma_0$ in $\omega_1$ and a family of unit vectors $(f_\alpha)_{\alpha<\omega_1}$ in the range of $P_{\Gamma_0}$ that is isometrically equivalent to the unit vector basis of $\ell_1(\omega_1)$. Let $\alpha = \gamma(\alpha) + n(\alpha)$ be a countable ordinal, where $\gamma(\alpha)$ is a (possibly zero) limit ordinal and $n(\alpha)$ is a finite ordinal. We set 
$$z_\alpha = e_\alpha + \tfrac{1}{n(\alpha)+1} z_\alpha \qquad (\alpha<\omega_1),$$
where $(e_\alpha)_{\alpha < \omega_1}$ is the standard unit vector basis of $c_0(\omega_1) \subset \ell_\infty^c(\omega_1)$. Let $Z$ be the closed linear span of $z_\alpha$ ($\alpha<\omega_1$) in $\ell_\infty^c(\omega_1)$. Then $(z_\alpha)_{\alpha<\omega_1}$ is a 1-unconditional basis for $Z$. Moreover the operator $P_{\Gamma_0}|_Z$ is injective, so $Z$ cannot contain non-separable weakly compact sets as every weakly compact subset of $\ell_\infty$ is separable. In particular, $X$ does not contain any isomorphic copies of $c_0(\omega_1)$.\smallskip

By Lemma~\ref{small}, $Z$ does not embed into $\ell_\infty$ because the operator $P_{[0,\alpha)}|_X$ is not bounded below  for any countable ordinal $\alpha$.\smallskip

Finally, we remark that $P_{\Gamma_0}$ is an isomorphism when restricted to the copy of $\ell_1(\omega_1)$ spanned by the family $\{z_\alpha\colon \alpha<\omega_1, n(\alpha)=0\}.$\end{proof}

\begin{remark} As was noted in the proof of   Theorem \ref{example}, the example $Z$ does not embed into $\ell_\infty$ but there is an injective operator from $Z$ into $\ell_\infty$. The first space having these properties was constructed in \cite{jl}, but that space does not have an unconditional basis.
\end{remark}

\subsection{An extension to higher densities}
For every cardinal $\lambda$, there is a natural generalisation of the space $\ell_\infty^c(\Gamma)$; namely, $\ell_\infty^\lambda(\Gamma)$, the subspace of $\ell_\infty(\Gamma)$ that comprises functions whose supports have cardinality strictly less than $\lambda$. In this notation, $\ell_\infty^c(\Gamma) = \ell_\infty^{\omega_1}(\Gamma)$. We note that Theorem~\ref{main} has a natural counterpart for spaces $\ell_\infty^\lambda(\Gamma)$, whenever $\lambda$ is a regular cardinal.

\begin{theorem}\label{higher}Let $\Gamma$ be a set, $\lambda$ a regular cardinal number, and let $X$ be a subspace of $\ell_\infty^\lambda(\Gamma)$. Then the following assertions are equivalent:
\begin{romanenumerate}
\item\label{ts1} $w^*{\rm -dens}\, X^* < \lambda$,
\item\label{ts2} $X$ embeds into $\ell_\infty(\kappa)$ for some $\kappa < \lambda$,
\item\label{ts2a}there exists a bounded linear injection from $X$ into $\ell_\infty(\kappa)$ for some $\kappa < \lambda$,
\item\label{ts3} $X$ does not contain a subspace that is isomorphic to  $c_0(\lambda)$,
\item\label{ts4} $X$ does not contain a subspace that is isometrically  isomorphic to  $c_0(\lambda)$.
\end{romanenumerate}
\begin{proof}Note that \eqref{ts2a} $\Rightarrow$ \eqref{ts1}. Indeed, if there is a bounded linear injection $T$ from $X$ into $\ell_\infty(\kappa)$, then $T^*$ has weak*-dense range. As (by Goldstine's theorem) the weak* density of $\ell_\infty(\kappa)^*$ is $\kappa$, the conclusion follows. The implication \eqref{ts2a} $\Rightarrow$ \eqref{ts3} follows from the fact that the weak* density of $c_0(\lambda)^*$ is $\lambda$ and thus there is no bounded linear injection from $c_0(\lambda)$ to $\ell_\infty(\kappa)$ for $\kappa < \lambda$ (see, \emph{e.g.}, \cite[Fact 4.10]{czechbook}). (We remark in passing that the implication \eqref{ts2} $\Rightarrow$ \eqref{ts3} was proved directly in \cite[Proposition 3.4]{jks}.) As previously, it is enough to prove that \eqref{ts4} $\Rightarrow$ \eqref{ts2a}. \smallskip 

Assume contrapositively that for all $\kappa < \lambda$ there is no bounded linear injection from $X$ into $\ell_\infty(\kappa)$. In particular, $|\Gamma| \geqslant \lambda$ as otherwise $\ell_\infty^\lambda(\Gamma) = \ell_\infty(\Gamma)$ but $X$ is a subspace of $\ell_\infty^\lambda(\Gamma)$.\smallskip  

Let $\mathcal A$ be a family of non-zero vectors in $X$ that is maximal with respect to the property that the vectors have  pairwise disjoint supports. If $\mathcal{A}$ has cardinality $\lambda$, the conclusion follows as $\mathcal{A}$ spans an isometric copy of $c_0(\lambda)$. So assume that $|\mathcal{A}| < \lambda$. Let $$ \Lambda = \bigcup_{f\in \mathcal A} {\rm supp}\, f. $$
As $\lambda$ is regular (and $|\Gamma| \geqslant \lambda$), $|\Lambda | < \lambda$. Consequently, by maximality of  $\mathcal{A}$,  the contractive projection $P_\Lambda : \ell_\infty^\lambda(\Gamma) \to \ell_\infty^\lambda(\Lambda)$  maps $X$ injectively into  $ \ell_\infty(\Lambda)$; a contradiction
\end{proof}
 \end{theorem}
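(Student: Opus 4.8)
The plan is to prove the five assertions equivalent through a cycle of implications, mirroring the argument for Theorem~\ref{main}. Two of the steps are immediate: \eqref{ts2} $\Rightarrow$ \eqref{ts2a}, since an isomorphic embedding is in particular an injection, and \eqref{ts3} $\Rightarrow$ \eqref{ts4}, since an isometric copy is a fortiori an isomorphic one. For \eqref{ts2a} $\Rightarrow$ \eqref{ts1} I would use duality: an injective operator $T\colon X \to \ell_\infty(\kappa)$ with $\kappa < \lambda$ has an adjoint $T^*$ with weak*-dense range, and because $\ell_\infty(\kappa) = \ell_1(\kappa)^*$, Goldstine's theorem yields that $\ell_\infty(\kappa)^*$ has weak* density $\kappa$; hence $w^*{\rm -dens}\, X^* \leqslant \kappa < \lambda$. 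The converse \eqref{ts1} $\Rightarrow$ \eqref{ts2} is the usual norming-set construction: if $(x_i^*)_{i<\kappa}$ is weak*-dense in $B_{X^*}$ with $\kappa < \lambda$, then it is $1$-norming and $x \mapsto (x_i^*(x))_{i<\kappa}$ embeds $X$ isometrically into $\ell_\infty(\kappa)$. Finally, \eqref{ts2a} $\Rightarrow$ \eqref{ts3} follows because a subspace isomorphic to $c_0(\lambda)$ inside $X$ would restrict $T$ to a bounded linear injection of $c_0(\lambda)$ into $\ell_\infty(\kappa)$, which is impossible as $c_0(\lambda)^* = \ell_1(\lambda)$ has weak* density $\lambda > \kappa$.

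This reduces everything to \eqref{ts4} $\Rightarrow$ \eqref{ts2a}, which I would establish by contraposition, generalising the transfinite selection in the proof of Theorem~\ref{main}. Suppose that for every $\kappa < \lambda$ there is no bounded linear injection from $X$ into $\ell_\infty(\kappa)$; I must produce an isometric copy of $c_0(\lambda)$ in $X$. First observe $|\Gamma| \geqslant \lambda$, for otherwise $\ell_\infty^\lambda(\Gamma) = \ell_\infty(\Gamma)$ and the inclusion would already give such an injection. Using Zorn's lemma, fix a family $\mathcal A$ of non-zero vectors of $X$ that is maximal with respect to the property that its members have pairwise disjoint supports. If $|\mathcal A| = \lambda$ then, after normalising, disjointness of supports forces $\|\sum_\alpha c_\alpha f_\alpha\|_\infty = \sup_\alpha |c_\alpha|$, so the closed linear span of $\mathcal A$ is isometrically isomorphic to $c_0(\lambda)$ and we are done.

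The delicate case is $|\mathcal A| < \lambda$, and this is where I expect the real work to lie. Set $\Lambda = \bigcup_{f\in\mathcal A}\operatorname{supp} f$. Here the \emph{regularity} of $\lambda$ is essential: $\Lambda$ is a union of fewer than $\lambda$ sets, each of cardinality strictly less than $\lambda$ (every function in $\ell_\infty^\lambda(\Gamma)$ has support of size $< \lambda$), so $|\Lambda| < \lambda$. Maximality of $\mathcal A$ ensures that no non-zero vector of $X$ can be supported entirely off $\Lambda$, so the contractive projection $P_\Lambda$ restricts to an injection on $X$; and since $|\Lambda| < \lambda$ we have $\ell_\infty^\lambda(\Lambda) = \ell_\infty(\Lambda) \cong \ell_\infty(|\Lambda|)$, giving a bounded linear injection of $X$ into $\ell_\infty(\kappa)$ with $\kappa = |\Lambda| < \lambda$ --- contradicting the standing hypothesis. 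The two points I would take most care over are this application of regularity to bound $|\Lambda|$ and the verification, from maximality, that $P_\Lambda|_X$ is injective; once these are in place the argument is a routine transcription of the countable case.
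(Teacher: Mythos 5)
Your treatment of the core implication \eqref{ts4} $\Rightarrow$ \eqref{ts2a} is exactly the paper's argument (maximal family with pairwise disjoint supports, regularity of $\lambda$ to bound $|\Lambda|$, maximality to get injectivity of $P_\Lambda|_X$), and your proofs of \eqref{ts2a} $\Rightarrow$ \eqref{ts1} and \eqref{ts2a} $\Rightarrow$ \eqref{ts3} also coincide with the paper's. The problem is the step you added to close the cycle, \eqref{ts1} $\Rightarrow$ \eqref{ts2}. Condition \eqref{ts1} gives a weak*-dense subset of $X^*$ of cardinality $\kappa<\lambda$, but your norming-set construction needs a weak*-dense subset of $B_{X^*}$ (equivalently, a norming family) of that cardinality, and these are genuinely different: a total family $(x_i^*)_{i<\kappa}$ of norm-one functionals only gives a bounded linear \emph{injection} $x\mapsto (x_i^*(x))_{i<\kappa}$ into $\ell_\infty(\kappa)$, i.e.\ condition \eqref{ts2a}, not an isomorphic embedding. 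There is no reason for the map to be bounded below unless the family is norming.

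Moreover, this gap cannot be repaired: the implication \eqref{ts1} $\Rightarrow$ \eqref{ts2} (equivalently \eqref{ts2a} $\Rightarrow$ \eqref{ts2}) is false for general subspaces of $\ell_\infty^\lambda(\Gamma)$. The space $Z$ of Theorem~\ref{example} sits inside $\ell_\infty^c(\omega_1)=\ell_\infty^{\omega_1}(\omega_1)$ with $\lambda=\omega_1$ regular, admits a bounded linear injection into $\ell_\infty$ (so satisfies \eqref{ts2a}, hence \eqref{ts1}, \eqref{ts3} and \eqref{ts4}), yet does not embed into $\ell_\infty\cong\ell_\infty(\kappa)$ for any $\kappa<\omega_1$, so it fails \eqref{ts2}. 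In fairness, the paper's own proof never addresses this implication either (it is swept into ``as previously, it is enough to prove \eqref{ts4} $\Rightarrow$ \eqref{ts2a}''), so your attempt exposes a defect in the statement of Theorem~\ref{higher} rather than creating a new one; in Theorem~\ref{main} the analogous step \romanref{t1} $\Rightarrow$ \romanref{t2} is saved by the WLD hypothesis, which converts weak*-density of the dual into density of $X$ itself, and separable spaces do embed isometrically into $\ell_\infty$. Conditions \eqref{ts1}, \eqref{ts2a}, \eqref{ts3}, \eqref{ts4} are genuinely equivalent by the arguments you give; \eqref{ts2} should either be dropped or the theorem should carry an additional hypothesis.
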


The conclusion of Theorem~\ref{higher} fails for singular cardinal numbers. Indeed, let us take $\lambda = \omega_\omega = \lim_{n\to \infty} \omega_n.$ The space  $\ell_\infty( \omega_n)$ contains an isometric copy of the Hilbert space $\ell_2(\omega_n)$. In particular, the $c_0$-direct sum of $\ell_2(\omega_n)$ ($n\in \mathbb N$) embeds isometrically into $\ell_\infty^\lambda(\lambda)$, has density $\lambda$, and is WCG (and even Asplund). On the other hand, it does not contain $c_0(\lambda)$.

\end{document}